\begin{document}

\newtheorem{theorem}{Theorem}[section]
\newtheorem{definition}[theorem]{Definition}
\newtheorem{lemma}[theorem]{Lemma}
\newtheorem{proposition}[theorem]{Proposition}
\newtheorem{corollary}[theorem]{Corollary}

\theoremstyle{definition}
\newtheorem{remark}[theorem]{Remark}
\newtheorem*{remarks}{Remarks}

\numberwithin{equation}{section}

\newcommand{\Z}{\mathbb{Z}}
\newcommand{\R}{\mathbb{R}}
\newcommand{\C}{\mathbb{C}}
\newcommand{\N}{\mathbb{N}}
\newcommand{\essinf}{\operatorname*{ess\: inf}}
\newcommand{\inte}{\operatorname{int}}

\newcommand{\Real}{\mathop{\rm Re}\nolimits}
\newcommand{\Imag}{\mathop{\rm Im}\nolimits}

\newcommand{\eqn}{\begin{equation}}
\newcommand{\eqnend}{\end{equation}}

\title{Chaotic dynamics of a quasiregular sine mapping}
\author{Alastair N. Fletcher and Daniel A. Nicks}

\maketitle

\begin{abstract}
This article studies the iterative behaviour of a quasiregular mapping $S:\R^d\to\R^d$ that is an analogue of a sine function. We prove that the periodic points of $S$ form a dense subset of $\R^d$.  We also show that the Julia set of this map is $\R^d$ in the sense that the forward orbit under $S$ of any non-empty open set is the whole space $\R^d$. The map $S$ was constructed by Bergweiler and Eremenko~\cite{BE} who proved that the escaping set $\{x: S^k(x)\to\infty \mbox{ as } k\to\infty\}$ is also dense in $\R^d$.
\end{abstract}

\section{Introduction}

In complex dynamics, it is now well known that there are transcendental entire functions such as $e^z$ and $\pi\sin z$ for which the Julia set is the whole complex plane (see \cite{Mi} and \cite{Do}, the former confirming a conjecture of Fatou from 1926). This means that, in some sense, these functions are chaotic everywhere.
The recent interest in the iteration of quasiregular mappings on $\R^d$ has led to a search for examples which exhibit interesting dynamics. We will recall the definition of quasiregularity in Section~\ref{sect:qr}, while we refer the reader to \cite{BeCMFT}, and also to \cite{Be,BN}, for an introduction to the iteration theory of quasiregular mappings.

In this article, we show that a quasiregular version of a sine function constructed by Bergweiler and Eremenko in \cite{BE} has iterates that behave chaotically everywhere. In particular, we show that the whole space $\R^d$ possesses three properties typically associated with the Julia set of a holomorphic function.

\begin{theorem}
\label{mainthm}
Let $d\geq 2$ be an integer. Then there exists a quasiregular mapping ${S:\R^d \to \R^d}$ such that
\begin{enumerate}[(i)]
\item The periodic points of $S$ are dense in $\R^d$.
\item $S$ has the blowing-up property everywhere in $\R^d$; that is,
\[ \bigcup_{k=0}^\infty S^k(U) = \R^d, \quad \mbox{for any non-empty open set } U \subseteq \R^d. \]
Equivalently, for any $x\in\R^d$, the backward orbit $O^-(x)=\bigcup_{k=0}^\infty S^{-k}(x)$ is dense in $\R^d$.
\end{enumerate}
\end{theorem}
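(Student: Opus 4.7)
The plan is to first establish the blowing-up property (ii), building on the Bergweiler–Eremenko theorem that the escaping set of $S$ is dense in $\R^d$, and then to deduce density of periodic points (i) via a Brouwer fixed point argument applied to an inverse branch of a suitable iterate of $S$.

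\textbf{Part (ii).} Let $U \subseteq \R^d$ be non-empty and open. By the Bergweiler–Eremenko theorem, $U$ contains an escaping point $x_0$ whose orbit $x_k := S^k(x_0)$ tends to $\infty$. The Bergweiler–Eremenko sine $S$ is periodic in $d-1$ of the coordinates and grows exponentially in the remaining direction, so wherever $|x|$ is large the local expansion factor of $S$ at $x$ is comparable to $|S(x)|$. Using a quasiregular analogue of the Koebe distortion theorem, one shows that $S^k(U)$ contains a Euclidean ball centred at $x_k$ whose radius tends to $\infty$ with $k$. Once this radius exceeds the diameter of a fundamental domain of the periodicity lattice of $S$, the next application of $S$ maps that ball surjectively onto $\R^d$, by the covering property built into the construction of $S$. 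Hence $\bigcup_{k \geq 0} S^k(U) = \R^d$; the reformulation via backward orbits is elementary.

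\textbf{Part (i).} Let $V$ be non-empty and open, and shrink it to a small open ball. I would choose an escaping point $x_0 \in V$ and, after a small perturbation inside $V$, arrange that the orbit $x_k = S^k(x_0)$ misses the branch set $B_S$ of $S$ for every $k \geq 0$; this is possible because $B_S$ is closed of Hausdorff dimension at most $d-2$. The expansion estimate from (ii) should yield, for each sufficiently large $N$, a closed ball $\bar E \subseteq V$ centred at $x_0$ on which $S^N$ is a homeomorphism onto its image, and such that $S^N(\bar E)$ contains a closed Euclidean ball $\bar D$ centred at $x_N$ of radius $R_N$ with $R_N / |x_N| \to \infty$. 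Then $\bar D$ eventually contains the origin, and hence also $\bar V$. The inverse branch $g := (S^N|_{\bar E})^{-1}$ is defined on $\bar D$ and, restricted to $\bar V$, gives a continuous map $g : \bar V \to \bar E \subseteq V \subseteq \bar V$. Brouwer's fixed point theorem then produces $x \in \bar V$ with $g(x) = x$, i.e.\ $S^N(x) = x$.

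The main obstacle is quantifying the expansion sharply enough. Heuristically, the infinitesimal expansion of $S$ at $x_j$ is of order $|S(x_j)| = |x_{j+1}|$, so pulling a small ball of radius $r$ around $x_0$ forward yields a ball around $x_N$ of radius of order $r \prod_{j=0}^{N-1} |x_{j+1}|$, super-exponential in $N$ and much larger than $|x_N|$ alone; this gives the required $R_N/|x_N| \to \infty$. Turning this heuristic into a rigorous lower bound requires the explicit form of the Bergweiler–Eremenko sine (in particular its behaviour on a single fundamental cell) combined with the quasiregular Koebe-type distortion estimate. A smaller point is the avoidance of the branch set along the orbit, which is a countable intersection of open dense conditions on $x_0 \in V$ and is handled by Baire category.
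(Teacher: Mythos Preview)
Your outline has two genuine gaps, and the paper's argument avoids both by a more hands-on route.

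First, in part~(ii) your final step is false as stated: $S$ is continuous, so the image of any bounded ball under one application of $S$ is again bounded, never all of $\R^d$. The ``fundamental domain of the periodicity lattice'' you have in mind is a full beam $T^*(r)=[-1,1]^{d-1}\times\R$ (up to translation), which is unbounded in the $x_d$-direction; no Euclidean ball contains one. What is true is that a ball of radius $R$ maps into a set of diameter roughly $\lambda e^R$, and one must iterate to exhaust $\R^d$; but then you need to control \emph{where} the successive images sit, not just how large they are.

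Second, and more fundamentally, the Koebe-type distortion estimate you invoke is not available for iterates of a merely quasiregular map. The dilatation of $S^N$ can be as large as $K^N$, and the constants in any quasiregular distortion theorem depend on the dilatation. You correctly identify this as ``the main obstacle''; beating the exponential loss in the Koebe constant by the super-exponential expansion along an escaping orbit is plausible but is not a black-box argument, and carrying it out would amount to re-deriving the beam-by-beam expansion estimates that the paper uses directly.

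The paper sidesteps both issues. It uses neither an escaping point nor any distortion theorem. Instead it exploits the uniform bound $\ell(DS)\ge\alpha>4\sqrt{d-1}$ to show that whenever $B(x,t)$ lies in a single half-beam (or in a single beam with $x\in H_0$) one has $S(B(x,t))\supseteq B(S(x),\alpha t)$. One then constructs a chain $U_0,U_1,\ldots$ of open sets with $U_{j+1}\subseteq S(U_j)$, each contained in the union of two adjacent half-beams, so that $S|_{U_j}$ is a homeomorphism by construction and no branch-set avoidance is needed. After finitely many steps the chain reaches a ball $B(z,1)$ with $S(z)=0$; thereafter one takes $U_j=B(0,\alpha^{\,j-m-1})\cap\inte T_0$, which eventually swallows $\overline{U_0}$, and your Brouwer step finishes~(i). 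For~(ii) the same chain gives a neighbourhood of $0$ inside some $S^m(U)$, and then one iterates the elementary inclusion $S(B(0,t))\supseteq B(0,\alpha t)$.
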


We remark that all the periodic points of $S$ are repelling because the mapping $S$ is constructed  to be locally expanding everywhere in $\R^d$; see Section~\ref{sect:qrsine}.

The escaping set $I(f) = \{ x: f^k(x)\to\infty \mbox{ as } k\to\infty \}$ of a quasiregular map $f$ was studied in \cite{BFLM} and \cite{FN}. For holomorphic functions $f$, the boundary of the escaping set $\partial I(f)$ coincides with the Julia set \cite{E}.

\begin{corollary}\label{cor}
For the mapping $S:\R^d \to \R^d$ in Theorem~\ref{mainthm}, we have $\partial I(S)=\R^d$.
\end{corollary}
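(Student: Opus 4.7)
The plan is to exploit the standard topological characterisation
\[
\partial I(S) \;=\; \overline{I(S)} \,\cap\, \overline{\R^d \setminus I(S)}
\]
and show that both $I(S)$ and its complement are dense in $\R^d$. Once both densities are established, every point of $\R^d$ lies in $\partial I(S)$, and the reverse inclusion is automatic.

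For the first density, I would simply invoke the result of Bergweiler and Eremenko cited in the abstract, which asserts that $I(S)$ is already dense in $\R^d$. No additional work is needed here.

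For the density of $\R^d \setminus I(S)$, the key observation is that periodic points cannot escape to infinity: if $S^p(x)=x$ for some $p\ge 1$, then the forward orbit $\{S^k(x):k\ge 0\}$ is a finite set and in particular bounded, so $x\notin I(S)$. Hence the set of periodic points of $S$ is contained in the complement $\R^d\setminus I(S)$. By Theorem~\ref{mainthm}(i), the periodic points are dense in $\R^d$, and therefore so is $\R^d \setminus I(S)$.

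There is no real obstacle here; the corollary is essentially immediate once one combines the Bergweiler--Eremenko density of $I(S)$ with part (i) of Theorem~\ref{mainthm}. The only minor point to verify carefully is the remark that periodic points lie outside the escaping set, which follows from the boundedness of finite orbits.
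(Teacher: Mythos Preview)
Your proposal is correct and follows essentially the same approach as the paper: both combine the Bergweiler--Eremenko density of $I(S)$ with Theorem~\ref{mainthm}(i) and the elementary observation that periodic points lie in $I(S)^c$, concluding that $I(S)$ and its complement are each dense. The paper's argument is slightly more terse, but the logic is identical.
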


The mapping $S$ appearing in Theorem~\ref{mainthm} was constructed by Bergweiler and Eremenko in \cite{BE} (see Section~\ref{sect:qrsine} below). By considering the dynamics of such a map they proved the following counterintuitive result, in which a \emph{hair} is a simple curve tending to infinity in $\R^d$ with one endpoint in $\R^d$: There exists a representation of $\R^d$ as a union of hairs such that two hairs can intersect only at a common endpoint, while the union of hairs without their endpoints has Hausdorff dimension one. Furthermore, these hairs minus their endpoints lie in the escaping set $I(S)$ \cite[Lemma~3]{BE}. It follows that $I(S)$ is dense in $\R^d$.

In view of this result, it is now easy to establish Corollary~\ref{cor} (but see also Remark~\ref{correm} below). The periodic points of $S$ clearly belong to the complement $I(S)^c$. Hence Theorem~\ref{mainthm}(i) and the above imply that both the sets $I(S)$ and $I(S)^c$ are  dense in $\R^d$ and thus $\partial I(S)=\R^d$.

\begin{remarks}\mbox{}
\begin{enumerate}
\item The quasiregular mapping $S$ is said to be of \emph{transcendental type} because $S(x)$ does not tend to a limit as $x\to\infty$. The Julia set of a quasiregular mapping of transcendental type was recently defined in \cite{BN} as the set of all points such that the complement of the forward orbit of any neighbourhood has conformal capacity zero. Under this definition, the blowing-up property of Theorem~\ref{mainthm}(ii) implies that the Julia set of $S$ is the whole of $\R^d$. For $d\ge3$, this is the first example of a quasiregular map of transcendental type with Julia set equal to $\R^d$.
\item Siebert \cite{S} has shown that every transcendental type quasiregular map has infinitely many periodic points of all periods greater than one.
\item Other quasiregular versions of the sine function have been constructed by Drasin \cite{Dr} and Mayer \cite{Ma}.
\item In \cite{FN3} the authors construct a quasiregular mapping $\R^3 \to \R^3 \cup \{ \infty \}$ with poles, analogous to the tangent function, for which properties like those in Theorem~\ref{mainthm} and Corollary~\ref{cor} hold on a plane in $\R^3$. Further, the family of iterates of this mapping is equicontinuous away from this plane.
\end{enumerate}
\end{remarks}

\section{Quasiregular mappings} \label{sect:qr}

Quasiregular mappings are the natural higher-dimensional generalization of holomorphic functions on the complex plane. A continuous function $f:U\to\R^d$ on a domain $U\subseteq\R^d$ is called \emph{quasiregular} if it belongs to the Sobolev space $W^1_{d,\mathrm{loc}}(U)$ and if there exists $K\ge1$ such that
\[ \|Df(x)\|^d \le KJ_f(x) \quad \mbox{for almost every } x\in U, \]
where $\|Df(x)\|$ is the norm of the derivative of $f$ and $J_f(x)$ denotes the Jacobian determinant. Moreover, this condition implies that there exists $K' \ge1$ such that
\[ J_f(x) \leq K' \ell(Df(x))^d \quad \mbox{for almost every } x\in U,\]
where
\[\ell(A) = \inf_{\|h\|=1} \|Ah\|\]
for a linear map $A$. More details on quasiregular mappings may be found in \cite{R}. We mention only that non-constant quasiregular maps are discrete, open and differentiable almost everywhere. Also, the iterates of a quasiregular map are themselves quasiregular, although the dilatation constants $K$ and $K'$ for the $n$th iterate $f^n$ may become arbitrarily large as $n$ increases.

\section{A quasiregular version of the sine function}\label{sect:qrsine}

\subsection{Construction}

In this section, we briefly recall the construction of the quasiregular version of the sine function given in \cite{BE}.
Fix $d \geq 2$, and write $x=(x_1,\ldots,x_d)$ for an element in $\R^d$ and $\|x\|$ for its Euclidean norm. We denote by $B(x,t)$ the ball of radius $t$ centred at $x$. Let $H_0$ be the hyperplane $\{x \in \R^d : x_d=0\}$.

We start with a bi-Lipschitz mapping $F$ from the half-cube
$[-1,1]^{d-1} \times [0,1]$ to the upper half-ball
\[ \{ x \in \R^d : \|x\| \leq 1, x_d \geq 0 \} \]
which maps the face $[-1,1]^{d-1} \times \{ 1 \}$ onto the hemisphere $\{ x \in \R^d : \|x\| = 1, x_d \geq 0\}$. We additionally assume that $F$ fixes the origin. Such a map $F$ is explicitly constructed in \cite[\S4]{BE}.
We extend $F$ to a mapping $F:[-1,1]^{d-1} \times [0,\infty)\to \{ x \in \R^d : x_d \geq 0 \}$ by
\[ F(x) = e^{x_d-1}F(x_1,\ldots, x_{d-1},1)\]
for $x_d > 1$. Using repeated reflections in hyperplanes, $F$ is extended to a self-mapping of $\R^d$.

The quasiregularity of $F$ is discussed in \cite[\S5]{BE}. In particular, $F$ is differentiable almost everywhere and it is easily shown in \cite[p.167]{BE} that
\[ \beta := \essinf_{x\in\R^d} \ell (DF(x))>0. \]
We now choose $\lambda>4\sqrt{d-1}/\beta$ and consider the function $S:=\lambda F$. This function is locally uniformly expanding in the sense that
\eqn \alpha:=\essinf_{x\in\R^d} \ell (DS(x))=\lambda\beta > 4\sqrt{d-1} >1. \label{defn alpha} \eqnend
See Remark~\ref{correm}. The purpose of this article is to show that this function $S$ has the dynamical properties described in Theorem~\ref{mainthm}.

Following \cite{BE}, we let $\mathcal{S} = \Z^{d-1} \times \{-1,1\}$ and, for $r=(r_1,\ldots,r_d) \in \mathcal{S}$, we define a fundamental half-beam
\[ T(r) = \{ x \in \R^d : |x_j-2r_j|\leq 1, \text{ for } 1 \leq j \leq d-1,\ r_dx_d \geq 0 \},\]
noting that $T_0:=T(0,\ldots,0,1)$ is the initial half-beam in which $F$ was defined. Observe that $S$ maps $T(r)$ bijectively onto either the half-space $\{x_d \geq 0\}$ or $\{x_d \leq 0\}$. We will say that $T(r)$ and $T(s)$ are \emph{adjacent} half-beams if they intersect at a common boundary face. Note that $S$ maps adjacent half-beams onto different half-spaces. The next lemma follows immediately from the construction of $S$.

\begin{lemma}\label{lem:beams}
Let $T(r)$ and $T(s)$ be two adjacent half-beams. Then for any open set $U\subseteq T(r)\cup T(s)$, the  restricted map $S:U\to S(U)$ is a homeomorphism.
\end{lemma}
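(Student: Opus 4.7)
The plan is to establish that $S|_U$ is injective and then invoke the openness of non-constant quasiregular maps. For the first part I would unpack what the construction gives on a single half-beam. The bi-Lipschitz map $F$ on the half-cube $[-1,1]^{d-1}\times[0,1]$, together with the radial extension $F(x)=e^{x_d-1}F(x_1,\dots,x_{d-1},1)$ for $x_d>1$, is a homeomorphism of $T_0$ onto the closed upper half-space. The iterated reflections used to extend $F$ then yield, on each half-beam $T(r)$, a homeomorphism $S|_{T(r)}$ onto a closed half-space, and, as the excerpt already notes, the reflection convention forces adjacent beams to be sent to opposite half-spaces. Being a homeomorphism, $S|_{T(r)}$ carries the $\R^d$-interior of $T(r)$ bijectively onto the corresponding open half-space and the whole boundary $\partial T(r)$ bijectively onto the hyperplane $H_0=\{x_d=0\}$.

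Writing $\Sigma=T(r)\cap T(s)$ for the shared $(d-1)$-dimensional face and $\Sigma^\circ$ for its relative interior, a direct inspection (distinguishing whether $\Sigma$ is a horizontal face in $\{x_d=0\}$ or a vertical face) shows that
\[ \inte_{\R^d}\bigl(T(r)\cup T(s)\bigr)=\inte T(r)\,\cup\,\inte T(s)\,\cup\,\Sigma^\circ, \]
and this union is disjoint. Since $U$ is open in $\R^d$ while $T(r)\cup T(s)$ is closed, every point of $U$ lies in this interior and hence in exactly one of the three pieces. Injectivity of $S|_U$ now reduces to a short case analysis: $\inte T(r)$ and $\inte T(s)$ are carried injectively into the disjoint open half-spaces $\{x_d>0\}$ and $\{x_d<0\}$, while $\Sigma^\circ\subseteq\partial T(r)$ is carried injectively into $H_0$, which meets neither open half-space. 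Thus no two of the three pieces produce colliding images, and each piece is mapped injectively on its own.

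Since $S$ is a non-constant quasiregular mapping it is open, so $S(U)$ is open in $\R^d$ and the continuous open bijection $S|_U\colon U\to S(U)$ is automatically a homeomorphism. The one point that requires care is that the \emph{entire} boundary $\partial T(r)$, not merely the shared face $\Sigma$, is what gets swept into $H_0$; a priori a point on a non-shared face of $T(r)$ could collide with a point on a non-shared face of $T(s)$. It is therefore crucial that $U$ is open in $\R^d$, so that $U$ avoids the $\R^d$-boundary of $T(r)\cup T(s)$ and such non-shared boundary points never enter the argument.
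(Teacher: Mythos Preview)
Your argument is correct and is precisely the unpacking the paper has in mind when it says the lemma ``follows immediately from the construction of $S$'' (the paper gives no further proof). Your care about non-shared boundary faces, and the observation that openness of $U$ in $\R^d$ forces $U\subseteq\inte(T(r)\cup T(s))$, are exactly the points that make the one-line claim honest.
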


For $r\in\mathcal{S}$, we denote the corresponding full beam by
\[ T^*(r) = \{ x \in \R^d : |x_j-2r_j|\leq 1, \text{ for } 1 \leq j \leq d-1 \}. \]
Since $T^*(r)$ is the union of the adjacent half-beams $T(r_1,\ldots,r_{d-1},1)$ and $T(r_1,\ldots,r_{d-1},-1)$, Lemma~\ref{lem:beams} implies that $S$ maps the interior of $T^*(r)$ homeomorphically onto its image.

\begin{remark}\label{correm}
The conditions we have placed on $S=\lambda F$ are more rigid than those in \cite{BE} in two ways. Firstly we ask that $F(0)=0$, and secondly we have taken a larger value of $\lambda$ to ensure that $\alpha=\lambda\beta>4\sqrt{d-1}$ in \eqref{defn alpha}, rather than just requiring  that $\alpha>1$ as in \cite[(1.2)]{BE}.

We remark here that Corollary~\ref{cor} holds without these strengthened conditions. Assuming $\alpha>1$, it can be proved that $I(S)^c$ is dense in $\R^d$ by repeated use of Lemma~\ref{lem1} below together with the observation that, for all half-beams $T(r)$, we have $\partial T(r) \cap I(S)=\emptyset$ because
\[ S(\partial T(r))\subseteq H_0 \quad \mbox{ and } \quad S(H_0) \subseteq H_0 \cap B(0,\lambda). \]
\end{remark}

\subsection{Expansion properties of $S$} \label{sect:expansion}

By considering the inverse function of $S|_{T(r)}$, it follows from \eqref{defn alpha} that (as in \cite[(2.3)]{BE})
\eqn  \|S(a)-S(b)\| \ge \alpha\|a-b\| \quad \mbox{for } a,b\in T(r), \ r\in\mathcal{S}. \label{expd ineq} \eqnend
This uniform expansion property of $S$ in half-beams will play an important role in the proof of Theorem~\ref{mainthm}.
We use it now to deduce the next two lemmas.

\begin{lemma}\label{lem1}
If $B(x,t)$ lies in some half-beam $T(r)$, or if $x\in H_0$ and $B(x,t)$ lies in some beam $T^*(r)$, then
\[ B(S(x),\alpha t) \subseteq S(B(x,t)). \]
\end{lemma}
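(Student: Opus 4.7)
The plan is to prove that every point $y$ in the open ball $B(S(x),\alpha t)$ has a preimage in $B(x,t)$ under $S$, by a path-lifting argument along the straight segment from $S(x)$ to $y$. Two ingredients drive this: (a) $S|_{B(x,t)}$ is a homeomorphism onto $S(B(x,t))$, and (b) $\|S(a)-S(x)\|\ge\alpha\|a-x\|$ for every $a\in B(x,t)$.

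I would first establish (a) and (b) in both cases. In case one, $B(x,t)\subseteq T(r)\subseteq T(r)\cup T(s)$ for any half-beam $T(s)$ adjacent to $T(r)$, so (a) follows from Lemma~\ref{lem:beams}, while (b) is a direct application of \eqref{expd ineq}. In case two, $B(x,t)$ lies in the full beam $T^*(r)$, the union of two adjacent half-beams $T$ and $T'$; again (a) follows from Lemma~\ref{lem:beams}. For (b), the hypothesis $x\in H_0$ is essential: since $T$ and $T'$ are convex and both contain $x$ on their common boundary face, the segment from $x$ to any $a\in B(x,t)$ lies wholly within whichever of $T,T'$ contains $a$, and \eqref{expd ineq} applies there.

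Having (a) and (b), I would fix $y\in B(S(x),\alpha t)$, consider $\gamma(s)=(1-s)S(x)+sy$ for $s\in[0,1]$, and study
\[ A=\{s\in[0,1]:\gamma(s)\in S(B(x,t))\}. \]
Since $S$ is an open map (non-constant quasiregular maps are open) and $0\in A$, $A$ is nonempty and open in $[0,1]$. For closedness, suppose $s_n\to s$ with $s_n\in A$, and let $\tilde\gamma(s_n)=(S|_{B(x,t)})^{-1}(\gamma(s_n))$. Property (b) yields
\[ \|\tilde\gamma(s_n)-x\|\le\tfrac{1}{\alpha}\|\gamma(s_n)-S(x)\|=\tfrac{s_n}{\alpha}\|y-S(x)\|\le\tfrac{1}{\alpha}\|y-S(x)\|<t, \]
so $(\tilde\gamma(s_n))$ lies in the compact set $\overline{B(x,\|y-S(x)\|/\alpha)}\subset B(x,t)$. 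Passing to a subsequential limit $z$ in this compact set, continuity of $S$ gives $S(z)=\gamma(s)$ and $z\in B(x,t)$, so $s\in A$. Hence $A$ is clopen and $A=[0,1]$, yielding $y=\gamma(1)\in S(B(x,t))$.

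The main subtlety I foresee is verifying (a) and (b) in case two, where $B(x,t)$ straddles the hyperplane $H_0$; the hypothesis $x\in H_0$ combined with the convexity of the two half-beams composing $T^*(r)$ is precisely what is needed to resolve both points.
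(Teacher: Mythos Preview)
Your argument is correct, but it takes a different and longer route than the paper's. The paper argues directly via the boundary: since $S$ is open and continuous, any point $w\in\partial S(B(x,t))$ is the image of some $y\in\partial B(x,t)$; as $x$ and $y$ lie in a common half-beam (trivially in case one, and in case two because $x\in H_0$ belongs to both half-beams making up $T^*(r)$), inequality \eqref{expd ineq} gives $\|w-S(x)\|\ge\alpha t$, and the inclusion follows. Your path-lifting/clopen argument reaches the same conclusion but invokes Lemma~\ref{lem:beams} to secure injectivity (your property (a)), which the boundary approach does not need --- and in fact even your argument does not really need it, since any preimage of $\gamma(s_n)$ in $B(x,t)$ would satisfy the same bound. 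Also, for (b) in case two you can drop the convexity/segment discussion: \eqref{expd ineq} only requires that $x$ and $a$ lie in a common half-beam, which is immediate since $x\in H_0$ lies in both. The paper's approach buys brevity and avoids an unnecessary appeal to Lemma~\ref{lem:beams}; your approach is a robust technique that would adapt well to settings where the boundary of the image is harder to control, but here it is more machinery than needed.
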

\begin{proof}
Let $w$ be a point on the boundary of $S(B(x,t))$. Since $S$ is an open map, there is $y\in\partial B(x,t)$ such that $S(y)=w$. The points $x$ and $y$ lie in a common half-beam, so \eqref{expd ineq} gives that
\[ \|w-S(x)\| = \|S(y)-S(x)\| \ge \alpha\|y-x\| = \alpha t \]
and the lemma follows.
\end{proof}

\begin{lemma}\label{lem2}
If $x\in H_0$ and $B(x,t)$ lies in some beam $T^*(r)$, then we can find $y\in H_0$ and a beam $T^*(s)$ such that
\[ B(y,\min\{2t,\tfrac{1}{2}\}) \subseteq S(B(x,t))\cap T^*(s). \]
\end{lemma}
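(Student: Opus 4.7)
The key tool is Lemma~\ref{lem1}, which under our hypotheses gives $B(S(x),\alpha t)\subseteq S(B(x,t))$. Since $S(H_0)\subseteq H_0$, the centre $S(x)$ lies in $H_0$. Writing $\rho = \min\{2t,\tfrac{1}{2}\}$, the plan is to locate a point $y\in H_0$ close to $S(x)$ such that $B(y,\rho)$ fits inside both a beam $T^*(s)$ and the ball $B(S(x),\alpha t)$; the latter inclusion then yields $B(y,\rho)\subseteq S(B(x,t))$ as required.

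To construct $y$ and $s$, first choose $s=(s_1,\ldots,s_{d-1})\in\Z^{d-1}$ so that $|S(x)_j-2s_j|\le 1$ for each $1\le j\le d-1$. For each such $j$, let $y_j$ be the point of the interval $[2s_j-(1-\rho),\,2s_j+(1-\rho)]$ closest to $S(x)_j$, and set $y_d=0$. By construction $|y_j-2s_j|\le 1-\rho$, while $|y_j-S(x)_j|\le \rho$ because $y_j$ was obtained from $S(x)_j$ by moving at most a distance $\rho$ towards $2s_j$.

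The bound $|y_j-2s_j|\le 1-\rho$ shows that $B(y,\rho)\subseteq T^*(s)$: the beam is unbounded in the $x_d$-direction, and in each horizontal coordinate the triangle inequality yields $|z_j-2s_j|\le |z_j-y_j|+|y_j-2s_j|\le 1$ for $z\in B(y,\rho)$. Meanwhile $\|y-S(x)\|\le\rho\sqrt{d-1}$, so $B(y,\rho)\subseteq B(S(x),\rho(\sqrt{d-1}+1))$, and it remains to verify that $\rho(\sqrt{d-1}+1)\le\alpha t$. When $\rho=2t$, this reduces to $2(\sqrt{d-1}+1)\le\alpha$, which holds because $\alpha>4\sqrt{d-1}\ge 2(\sqrt{d-1}+1)$ for $d\ge 2$. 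When $\rho=\tfrac{1}{2}$ we have $t\ge\tfrac{1}{4}$, giving $\alpha t>\sqrt{d-1}\ge (\sqrt{d-1}+1)/2=\rho(\sqrt{d-1}+1)$. The main obstacle is really just this final bookkeeping: the strengthened constant $\alpha>4\sqrt{d-1}$ from \eqref{defn alpha} is precisely what is needed to handle both branches of the minimum uniformly.
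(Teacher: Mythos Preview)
Your proof is correct and follows essentially the same approach as the paper: apply Lemma~\ref{lem1} to get $B(S(x),\alpha t)\subseteq S(B(x,t))$, then choose $y\in H_0$ near $S(x)$ with $\|y-S(x)\|\le\rho\sqrt{d-1}$ so that $B(y,\rho)$ sits inside both a beam and $B(S(x),\alpha t)$, using $\alpha>4\sqrt{d-1}$ for the final estimate. The only cosmetic difference is in the choice of $y$: the paper places a $(d-1)$-cube of side $2\rho$ in $H_0\cap T^*(s)$ with one vertex at $S(x)$ and takes $y$ to be its centre, whereas you obtain $y$ by coordinate-wise nearest-point projection of $S(x)$ onto the shrunken beam cross-section; both constructions yield the same distance bound and the same chain of inequalities.
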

\begin{proof}
Write $\delta=\min\{2t,\tfrac{1}{2}\}$ and choose a beam $T^*(s)$ that contains $S(x)$. Since $\delta\le\tfrac12$, we can find a $(d-1)$-dimensional square $Q=[a_1,a_1+2\delta]\times\ldots\times[a_{d-1},a_{d-1}+2\delta]\times\{0\}$ in $H_0\cap T^*(s)$ with one vertex at $S(x)$ and side length $2\delta$. Let $y\in H_0$ be the centre of $Q$. Then $B(y,\delta)\cap H_0 \subseteq Q$ and so $B(y,\delta)\subseteq T^*(s)$. Moreover, if $z\in B(y,\delta)$ then
\begin{align*}
 \| z-S(x)\| &\le \|z-y\| + \|y-S(x)\| \\
 &\le \delta + \delta\sqrt{d-1} \le 2\delta\sqrt{d-1} \le 4t\sqrt{d-1} < \alpha t
\end{align*}
using \eqref{defn alpha}. Thus it follows from Lemma~\ref{lem1} that
\[ B(y,\delta) \subseteq B(S(x),\alpha t) \subseteq S(B(x,t)). \qedhere \]
\end{proof}

\section{Density of periodic points}

In this section we establish Theorem~\ref{mainthm}(i) by proving that the periodic points of $S$ are dense in $\R^d$. The proof of Theorem~\ref{mainthm}(ii) can be based on the same argument and is given in the next section.

It will suffice to show that there is a periodic point of $S$ in every open ball $U_0$ that is compactly contained in the interior of the half-beam $T_0$. To see this, note that for any non-empty open set $U$ in $\R^d$, there is such a ball $U_0$ with $S^2(U_0)\subseteq U$ (by the paragraph preceding Lemma~\ref{lem:beams}). Thus, if we can find a periodic point $p\in U_0$, then $S^2(p)$ will be a periodic point in $U$.

We now fix an open ball $U_0$ satisfying $\overline{U_0}\subseteq\inte T_0$. The strategy of the proof is to find a sequence of open sets $U_j$ such that
\begin{itemize}
\item $U_{j+1} \subseteq S(U_{j})$ for $j \geq 0,$
\item each $U_j$ is contained in the union of two adjacent half-beams,
\item there exists $N \in \N$ such that $\overline{U_0}\subseteq U_N$.
\end{itemize}
If these conditions are met, then by Lemma~\ref{lem:beams} there is a continuous branch of the inverse mapping $S^{-1}:U_{j+1}\to U_j$ for each $j$. Composing these inverse branches yields a continuous branch of $S^{-N}$ mapping $U_N$ into $U_0$. In particular, since $\overline{U_0}\subseteq U_N$, we have a continuous function  $\widetilde{S}:\overline{U_0}\to U_0$. The Brouwer fixed point theorem now states that $\widetilde{S}$ has a fixed point in $U_0$, and so $S$ has a periodic point of period $N$ in $U_0$, which proves Theorem~\ref{mainthm}(i).

It remains to construct the sequence of open sets $U_j$. By using Lemma~\ref{lem1}, we see that there is a maximal integer $n\ge0$ such that $S^n(U_0)$ is contained in a half-beam. For $0<j\le n$, we take $U_j=S^j(U_0)$.

By our choice of $n$, the image $S(U_n)=S^{n+1}(U_0)$ is not contained in any half-beam, so we can find a pair of adjacent half-beams $T(r)$ and $T(s)$ such that $S(U_n)$ intersects $T(r)\cap T(s)$. We may thus choose $U_{n+1}\subseteq S(U_n)$ to be an open ball satisfying
\eqn U_{n+1} \subseteq T(r)\cup T(s)  \quad \mbox{ and } \quad  T(r)\cap T(s)\cap U_{n+1} \ne\emptyset. \label{eqn:U_n+1} \eqnend

Since $U_{n+1}$ intersects the boundary between the half-beams $T(r)$ and $T(s)$, it follows that $S(U_{n+1})$ must intersect the hyperplane $H_0$. As $S(U_{n+1})$ is open, this allows us to find $x\in H_0$ and $\varepsilon>0$ such that the ball $B(x,\varepsilon)$ is contained in both $S(U_{n+1})$ and some full beam $T^*(r')$. We put $U_{n+2}=B(x,\varepsilon)$.

Next we shall use Lemma~\ref{lem2} to choose $U_{n+3},U_{n+4},\ldots$ to be balls of increasing radius, each with centre in $H_0$ and each contained in a beam. In particular, by repeated application of Lemma~\ref{lem2}, we find an integer $m>n+2$ and, for $j=n+3,\ldots,m$, balls $U_j=B(x^j,t_j)$ with the following properties: the centres $x^j\in H_0$; each $U_j$ lies in a beam; $U_{j} \subseteq S(U_{j-1})$; and $t_m=\tfrac12$.

Applying Lemma~\ref{lem1} to $U_m=B(x^m, \tfrac12)$, and recalling \eqref{defn alpha}, shows that
\eqn B(S(x^m),2\sqrt{d-1}) \subseteq S(U_m). \label{eqn:S(U_m)} \eqnend
 By construction, $S(z)=0$ if and only if $z=(2n_1,\ldots,2n_{d-1},0)$, where $n_i\in\Z$. We observe that the ball in \eqref{eqn:S(U_m)} is centred in $H_0$ and has a sufficiently large radius that it must contain a ball $B(z,1)$ such that $S(z)=0$. We take $U_{m+1}=B(z,1)$, noting that $U_{m+1}\subseteq S(U_m)$ by \eqref{eqn:S(U_m)}.

The final stage in the definition of our sequence of sets is to take
\eqn U_j = B(0,\alpha^{j-m-1})\cap\inte T_0, \quad  j\ge m+2. \label{eqn:j>m+2} \eqnend
We show that $U_{j+1} \subseteq S(U_{j})$ for $j \ge m+2$; the proof that $U_{m+2} \subseteq S(U_{m+1})$ is similar. If $y\in U_{j+1}$ for some $j \ge m+2$, then we have that $y\in\inte T_0\subseteq S(\inte T_0)$, and so there exists $w\in\inte T_0$ such that $S(w)=y$. Since $S(0)=0$, it follows from \eqref{expd ineq} and \eqref{eqn:j>m+2} that
\[ \alpha\|w\| \le \|S(w)\| = \|y\| < \alpha^{j-m}. \]
We deduce that $w\in U_{j}$ and therefore $y\in S(U_{j})$ as required.

Finally, by recalling that $\alpha>1$ and that $\overline{U_0}\subseteq\inte T_0$, we see from \eqref{eqn:j>m+2} that there is a large $N$ such that the ball $\overline{U_0}\subseteq U_N$. Therefore, the three conditions stated earlier are satisfied and the proof of Theorem~\ref{mainthm}(i) is complete.

\section{Blowing-up property of $S$}

We will prove Theorem \ref{mainthm}(ii) by showing that given any non-empty open set $U \subseteq \R^d$ and any $R>0$, there exists $k \in \N$ such that $B(0,R) \subseteq S^k(U)$.

We will apply the argument of the preceding section and so, given a non-empty open $U\subseteq \R^d$, our first step is to take a ball $U_0\subseteq T_0$ such that $S^2(U_0)\subseteq U$. Following the proof given earlier, we find $m\in\N$ such that $B(z,1)\subseteq S^{m+1}(U_0)$, where $S(z)=0$. This implies that $S^m(U)$ contains a neighbourhood of the origin (and, in fact, $S^m(U)$ contains $B(0,\alpha)$).

We now claim that, for any $t>0$,
\eqn S(B(0,t)) \supseteq B(0,\alpha t). \label{Boat} \eqnend
Since $\alpha>1$, repeated use of \eqref{Boat} will now be enough to establish Theorem~\ref{mainthm}(ii). The proof of the claim is similar to the final part of the preceding section. We recall that $S$ maps the beam $T^*(0)$ containing the origin onto $\R^d$. Hence, if $y\in B(0,\alpha t)$, then there is $w\in T^*(0)$ such that $S(w)=y$. Since $S(0)=0$, \eqref{expd ineq} gives that
\[ \alpha\|w\| \le \|S(w)\| = \|y\| < \alpha t, \]
from which it follows that $w\in B(0,t)$ and therefore \eqref{Boat} holds.

\end{document}